\documentclass[11pt]{amsart}
\usepackage{hyperref}
\usepackage{amssymb}
\usepackage{amsmath}
\usepackage{amsthm}
\usepackage{enumerate}
\usepackage{xcolor}
\theoremstyle{plain}
\newtheorem{thm}{Theorem}[section]
\newtheorem{prop}[thm]{Proposition}
\newtheorem{cor}[thm]{Corollary}
\newtheorem{lem}[thm]{Lemma}

\numberwithin{equation}{section}
\newtheorem{lemma}[equation]{Lemma}

\theoremstyle{definition}

\newtheorem{quest}[thm]{Question}

\newcommand{\ar}{\mathcal{R}}
\newcommand{\C}{\mathbb{C}}
\newcommand{\jury}{\diamond}
\numberwithin{equation}{section}
\title[Oppenheim--Schur inequalities for causal products]{Oppenheim--Schur inequalities for causal products}
\author[D.~Guillot, J.~Mashreghi, \and P.K.~Vishwakarma]{Dominique Guillot, Javad Mashreghi, \\ \and Prateek Kumar Vishwakarma}
\address[D. Guillot]{Department of Mathematical Sciences, University of Delaware, Newark, DE, USA 19716}
\email{dguillot@udel.edu}
\address[J. Mashreghi]{D\'epartement de math\'ematiques et de statistique, Universit\'e Laval, Qu\'ebec, QC, Canada G1V 0K6.}
\email{javad.mashreghi@ulaval.ca}
\address[P.K. Vishwakarma]{D\'epartement de math\'ematiques et de statistique, Universit\'e Laval, Qu\'ebec, QC, Canada G1V 0K6.}
\email{prateek-kumar.vishwakarma.1@ulaval.ca,~prateekv@alum.iisc.ac.in}
\date{\today}
\keywords{Fischer, Hadamard, Jury, Oppenheim, Schur, inequalities, causal, products, positive semidefinite matrix}
\subjclass[2000]{26D07, 15B48, 15A45, 15A15}
\begin{document}

\begin{abstract}
We establish a class of Oppenheim--Schur-type inequalities for the convolutional Jury product of positive semidefinite matrices. These results extend to a causal convolutional setting the classical Schur and Oppenheim inequalities associated with the Hadamard product. Our approach highlights structural parallels between entrywise and convolution-based matrix operations, revealing how positivity constraints interact with causality. Building on this perspective, we introduce a broader family of causal matrix products and prove unified inequalities that simultaneously recover the classical Schur and Oppenheim bounds as well as their convolutional Jury counterparts. These results provide a common framework for understanding positivity-preserving matrix products and suggest further connections between classical matrix analysis and causal operator structures.

\end{abstract}

\maketitle

\section{Introduction}

For an integer \( N \geq 1 \), a matrix \( A \in \mathbb{C}^{N \times N} \) is said to be \textit{positive definite} if \( z^* A z > 0 \) for all nonzero vectors \( z \in \mathbb{C}^N \). Similarly, \( A \) is called \textit{positive semidefinite} if \( z^* A z \geq 0 \) for all \( z \in \mathbb{C}^N \). Positive semidefinite matrices exhibit a variety of remarkable properties, many of which are reflected in classical inequalities. One of the earliest and most celebrated of these is due to J.~Hadamard \cite{hadamard1893resolution}, given by
\begin{equation*}
\det A \leq \prod_{k=0}^{N-1}a_{kk}    
\end{equation*}
for all positive semidefinite $A=(a_{jk})_{j,k=0}^{N-1}$. This result was later extended by E.~Fischer in \cite{fischer1908uber} to a more general determinantal bound. In seminal papers, a series of inequalities that further strengthened these bounds were proved by A.~Oppenheim and I.~Schur~\cite{MR1574213,schur1911}. These involve the \textit{Hadamard (Schur) product}, defined by 
\[
A\circ B:=(a_{jk}b_{jk})_{j,k=0}^{N-1}
\]
for $A=(a_{jk})_{j,k=0}^{N-1}$ and $B=(b_{jk})_{j,k=0}^{N-1}$. The classical Schur Product Theorem \cite{schur1911} states that if $A$ and $B$ are positive semidefinite, then their Hadamard product $A\circ B$ is also positive semidefinite. This semigroupoid structure has led to a number of developments in recent years, with applications and connections across several areas of mathematics; see, for instance, \cite{belton2019panorama,guillot2025positivity, khare2022matrix} and the references therein. In this paper, we focus on one concrete consequence of this structure: since $A\circ B$ is positive semidefinite whenever $A,B$ are positive semidefinite, we have, in particular $\det (A\circ B) \ge 0$. The work of Oppenheim and Schur sharpens this basic observation by showing that, for positive semidefinite $A$ and $B$, the following chain of increasingly strong inequalities holds:
\begin{align}\label{Oppen}
\begin{aligned}
\det (A\circ B) &\geq \det (A B) \\
\det (A\circ B) &\geq \det A \prod_{k=0}^{N-1} b_{kk}\\
\det (A\circ B) + \det (A B) &\geq \det A \prod_{k=0}^{N-1} b_{kk} + \det B \prod_{k=0}^{N-1}a_{kk}
\end{aligned}
\end{align}

The purpose of this paper is to establish analogous inequalities for other, more recently discovered, matrix products that preserve positivity. These include the matrix convolution, also known as the \textit{Jury product} (whose positivity related properties were explored in \cite{jurythesis, convolution-pres}) as well as for more general structured classes of products introduced in \cite{guillot2026sharp}. We develop a systematic treatment of these products and their corresponding determinant inequalities.

\section{The Jury product and inequalities}

Henceforth, fix an integer $N\geq 1$, and follow the convention that the entries of all matrices $A=(a_{jk})\in \C^{N\times N}$ are indexed by the row index $j=0,1,\dots,N-1$ and the column index $k=0,1,\dots,N-1$.\medskip

The \textit{Jury product} of $A=(a_{jk})$ and $B=(b_{jk})$, denoted by $A\jury B\in \C^{N\times N}$, is defined by
\begin{align}\label{Jury-product}
(A\jury B)_{jk}:=\sum_{m=0}^{j}\sum_{n=0}^{k}a_{m,n}b_{j-m,k-n}
\end{align}
for all $j=0,1,\dots,N-1$ and $k=0,1,\dots,N-1$. \medskip

This product was introduced by M.~Jury in his Ph.D.~thesis~\cite{jurythesis} as part of a unified approach to the classical Carathéodory--Fejér and Nevanlinna--Pick interpolation theorems in complex function theory, formulated as extension problems for positive matrices. For example, the Carathéodory interpolation problem \cite{Car1907}, studied by Schur \cite{schur1917,schur1918}, seeks a holomorphic function on the open unit disk whose Taylor expansion begins with a prescribed sequence of complex numbers and whose modulus is bounded by one throughout the disk. To support his unified treatment, Jury introduced the aforementioned product, defined with respect to a partial order on a fiber bundle, and used it to solve the classical interpolation problems. His framework also lays the groundwork for studying generalized forms of these problems. See Jury's thesis~\cite{jurythesis}, the work of Dritschel--Marcantognini--McCullough \cite{dritschel2007}, and the book by Agler and McCarthy~\cite{agler2002pick} for more details.

    Beyond their striking similarities in applications to interpolation problems, the Schur and Jury products share an important structural property: like the Schur product, the Jury product endows the \textit{closed convex cone} of positive semidefinite matrices with a semigroup structure. More precisely,  Jury showed that \medskip

\begin{center}
{\it if $A$ and $B$ are positive semidefinite, then so is $A\jury B$.}    
\end{center}\medskip

\noindent See \cite[Lemma~5.32]{agler2002pick} for details. As for the Schur product, our first main result shows that a stronger inequality holds for the Jury product.

\begin{thm}[The inequality]\label{T:main}
For any integer $N\geq 2$, we have
\begin{align*}
\det (A\jury B) &\geq b_{00}^{N} \det A  + a_{00}^{N}\det B
\end{align*}
for all positive semidefinite $A=(a_{jk}),B=(b_{jk})\in \C^{N\times N}$.
\end{thm}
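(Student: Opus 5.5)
The plan is to peel off the rank-one part of $A$ carried by its first column, and to bring in the other determinant through a Schur-complement argument in which one summand has vanishing first row and column. The starting observation is that, writing $S$ for the $N\times N$ lower shift matrix ($S e_k=e_{k+1}$), we have $(S^mBS^{*n})_{jk}=b_{j-m,k-n}$ (zero when an index is negative). Hence
\[
A\jury B=\sum_{m,n}a_{mn}\,S^mBS^{*n}.
\]
In particular, if $A=uu^*$ then $A\jury B=T_uBT_u^*$, where $T_u=\sum_m u_mS^m$ is the lower triangular Toeplitz matrix with first column $u$ and diagonal $u_0$. Jury's lemma (\cite[Lemma~5.32]{agler2002pick}) gives positivity of $A\jury B$ for positive semidefinite $A,B$, and the product is evidently bilinear.

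Because the base case $N=1$ of the statement fails (it would read $a_{00}b_{00}\ge 2a_{00}b_{00}$), I would first prove by induction on $N\ge1$ the auxiliary Oppenheim-type bound
\[
\det(A\jury B)\ge b_{00}^N\det A .
\]
I would then rerun the same argument, using this bound in place of the induction hypothesis, to get Theorem~\ref{T:main}.

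\textbf{Main step.} Assume first that $a_{00}>0$, and let $a$ be the first column of $A$. Then $A=\frac1{a_{00}}aa^*+A'$, where $A'$ is positive semidefinite with zero first row and column, and its lower $(N-1)\times(N-1)$ block $A''$ is the Schur complement. Thus $\det A=a_{00}\det A''$. This gives the decomposition
\[
A\jury B=X+Y,\qquad X=\tfrac1{a_{00}}T_aBT_a^*,\qquad Y=A'\jury B .
\]
Since $T_a$ is triangular with diagonal $a_{00}$, we get $\det X=a_{00}^{N}\det B$ and $x_{00}=a_{00}b_{00}$. In $Y$ only indices $m,n\ge1$ contribute, so $Y=0\oplus Y''$. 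Reindexing shows $Y''=A''\jury B_{N-1}$, where $B_{N-1}$ is the leading $(N-1)\times(N-1)$ principal submatrix of $B$; this is positive semidefinite.

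Assume for the moment $x_{00}>0$. Taking the Schur complement with respect to the $(0,0)$ entry, and using superadditivity of $\det$ on positive semidefinite matrices, gives
\[
\det(X+Y)=x_{00}\det\bigl(X/x_{00}+Y''\bigr)\ge \det X+x_{00}\det Y''.
\]
Applying the auxiliary bound in size $N-1$ gives $\det Y''\ge b_{00}^{N-1}\det A''$. Combining,
\[
\det(A\jury B)\ge a_{00}^N\det B+a_{00}b_{00}\,b_{00}^{N-1}\det A''=a_{00}^N\det B+b_{00}^N\det A .
\]
Dropping the first term proves the auxiliary bound in size $N$. For the base case $N=1$ it is an equality. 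With the auxiliary bound in hand for all sizes, the same display, now valid for $N\ge2$, is exactly Theorem~\ref{T:main}.

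\textbf{Degenerate cases.} If $a_{00}=0$, then the first row and column of $A$ vanish. Hence $\det A=0$, and also $a_{00}^N\det B=0$. The claim then reduces to $\det(A\jury B)\ge0$, which is Jury's positivity. If $b_{00}=0$ (so $x_{00}=0$), the Schur-complement step is replaced by continuity: apply the result to $B+\varepsilon I$ and let $\varepsilon\to0$. Alternatively, one can exchange the roles of $A$ and $B$ using commutativity of $\jury$.

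The main obstacle I expect is the observation that the naive splitting $\det(X+Y)\ge\det X+\det Y$ is useless here, because $\det Y=0$. The extra term has to come from the refined inequality $\det(X+0\oplus Y'')\ge\det X+x_{00}\det Y''$, together with the identification $Y''=A''\jury B_{N-1}$. I would check that identification carefully, since the shift in indices must match the leading block of $B$ and not the trailing one.
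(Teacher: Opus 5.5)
Your proof is correct, and it takes a genuinely different route from the paper's.

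\textbf{The paper's route.} The paper never splits $A$. Instead it lowers the last diagonal entries of $A$ and $B$ by $\det A_{N-1}/\det A_{N-2}$ and $\det B_{N-1}/\det B_{N-2}$. Here the paper's indexing is used: $A_n$ is the leading block of size $n+1$. This produces singular but still positive semidefinite matrices $\widetilde A,\widetilde B$, and the paper applies Jury's positivity to $\widetilde A\jury\widetilde B$. Only the $(N-1,N-1)$ entry of the product changes, and it drops by exactly $a_{00}\det B_{N-1}/\det B_{N-2}+b_{00}\det A_{N-1}/\det A_{N-2}$. This gives a recursion for the leading principal minors of $A\jury B$, hence the product bound of Theorem~\ref{T:inequality}. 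Theorem~\ref{T:main} then follows by expanding that product, keeping only its two extreme terms, and perturbing by $\varepsilon I$ to pass from definite to semidefinite matrices.

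\textbf{Your route.} You strip off the first Cholesky column of $A$ and induct on the trailing block.
The factorization $uu^*\jury B=T_uBT_u^*$ is Lemma~\ref{LJuryDetRank1} with the roles of the two factors exchanged.
The new structural input is the shift identity $A'\jury B=0\oplus(A''\jury B_{N-1})$; in the paper's indexing your $B_{N-1}$ is $B_{N-2}$.
The Schur-complement form of superadditivity, $\det(X+0\oplus Y'')\ge\det X+x_{00}\det Y''$, recovers exactly the term that plain superadditivity throws away.

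In effect you upgrade the argument of Section~\ref{S:special-case}, which with a full Cholesky decomposition only sees $b_{00}^N\det A$, to the full theorem. Your argument works directly with semidefinite matrices and involves no ratios of minors. Moreover, feeding the full theorem rather than the auxiliary bound into $\det Y''$ gives, for $N\ge3$, an extra nonnegative term $a_{00}b_{00}(a''_{00})^{N-1}\det B_{N-2}$ (paper's indexing), where $a''_{00}=a_{11}-|a_{10}|^2/a_{00}$.

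Your degenerate case $b_{00}=0$ is easier than you make it. Then $B$ has zero first row and column, both right-hand terms vanish, and Jury positivity suffices.

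\textbf{What the paper's route buys} is generality. It only uses how $a_{N-1,N-1}$ and $b_{N-1,N-1}$ enter the last diagonal entry of the product. So it transfers verbatim to the causal products of Section~\ref{S:causal} (Theorem~\ref{T:causal}). Your shift identity, by contrast, relies on the translation structure specific to $\jury$.
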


The clear similarities and differences of the above with Oppenheim--Schur's \eqref{Oppen} are evident. In particular, the additional nonnegative term $\det(AB)$ is \textit{not} required for the Jury product. Also, the first entries $a_{00}$ and $b_{00}$ are highlighted with degrees $N$ each, compared to all diagonal entries in Oppenheim--Schur's \eqref{Oppen} with degrees $1$ each. Furthermore, under the hypothesis of Theorem~\ref{T:main}, the inequality
\begin{align}\label{Opp:special}
\det (A\jury B) &\geq b_{00}^{N} \det A
\end{align}
follows as a special case. Although this constitutes a special case, it remains valuable for both \textit{technical} and \textit{historical} reasons. On the technical side, its direct proof highlights fundamental properties of the Jury product and offers valuable context for the more intricate general proof. From a historical perspective, this special case can be viewed as an analog of the second inequality in \eqref{Oppen}, which was, in fact, originally shown by Oppenheim~\cite{MR1574213}.

\subsection{Organization of the paper} The remainder of the paper is structured as follows. First, we prove the special \eqref{Opp:special} in Section~\ref{S:special-case}. Then we pause in Section~\ref{S:transformation} to examine the properties of a variant of a transformation used by Oppenheim \cite{MR1574213}. This digression plays a crucial role in proving the main inequality in Section~\ref{S:gen-proof}. Then in the final Section~\ref{S:causal} we introduce causal products of matrices, and extend and unify the classical and our Oppenheim--Schur inequalities.

\section{The special case}\label{S:special-case}

As before, for a fixed integer $N\geq 1$, we follow the convention that the entries of matrices $A=(a_{jk})\in \C^{N\times N}$ are indexed by $j,k=0,1,\dots,N-1$, and the entries of vectors $v=(v_0,\dots,v_{N-1})^T\in \C^{N}$ are indexed by $j=0,1,\dots,N-1$.

\begin{lemma}\label{LJuryRank1}
Suppose $u=(u_0,\dots,u_{N-1})^T,v=(v_0,\dots,v_{N-1})^T \in \C^{N}$, and let $A = uu^*$ and $B = vv^*$. Then
\[
A \jury B = (u \jury v)(u \jury v)^*, 
\]
where $(u \jury v)^T \in \C^{N}$ with $(u \jury v)_j = \sum_{m=0}^j u_m v_{j-m}$. 
\end{lemma}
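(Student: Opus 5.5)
The plan is to compute the $(j,k)$ entry of $A\jury B$ straight from the definition \eqref{Jury-product} and then factor the resulting double sum. With $A=uu^*$ and $B=vv^*$ we have $a_{mn}=u_m\overline{u_n}$ and $b_{j-m,k-n}=v_{j-m}\overline{v_{k-n}}$. Substituting these gives
\[
(A\jury B)_{jk}=\sum_{m=0}^{j}\sum_{n=0}^{k}u_m\overline{u_n}\,v_{j-m}\overline{v_{k-n}} .
\]

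The key step is to notice that the summand splits into a factor depending only on $m$ and a factor depending only on $n$. The index ranges are also independent: $m$ runs over $0,\dots,j$ and $n$ over $0,\dots,k$. So the double sum factors as
\[
\Bigl(\sum_{m=0}^{j}u_m v_{j-m}\Bigr)\Bigl(\sum_{n=0}^{k}\overline{u_n}\,\overline{v_{k-n}}\Bigr)=(u\jury v)_j\,\overline{(u\jury v)_k}.
\]
The right-hand side is exactly the $(j,k)$ entry of $(u\jury v)(u\jury v)^*$.

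This holds for every $j,k\in\{0,\dots,N-1\}$, which gives the identity. There is no real obstacle here. The only care needed is to check that the truncation is consistent: the indices $j-m$ and $k-n$ must stay within $0,\dots,N-1$. This is automatic, since $0\le m\le j\le N-1$ and $0\le n\le k\le N-1$.
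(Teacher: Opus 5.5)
Your proposal is correct and follows the paper's proof: you substitute $a_{mn}=u_m\overline{u_n}$ and $b_{j-m,k-n}=v_{j-m}\overline{v_{k-n}}$ into the definition, then factor the double sum as $(u\jury v)_j\overline{(u\jury v)_k}$. Your remark that the indices $j-m$ and $k-n$ stay in range is a harmless extra check that the paper leaves implicit.
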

\begin{proof}
We have 
\begin{align*}
(A \jury B)_{jk} &= \sum_{m=0}^j \sum_{n=0}^k a_{mn} b_{j-m,k-n} = \sum_{m=0}^j \sum_{n=0}^k u_m \overline{u_n} v_{j-m} \overline{v_{k-n}} \\
&= \left(\sum_{m=0}^j u_m v_{j-m}\right)\overline{\left(\sum_{n=0}^k u_n v_{k-n}\right)} = (u \jury v)_j \overline{(u \jury v)_k}. 
\end{align*}
\end{proof}

\noindent Given $v=(v_0,\dots,v_{N-1})^T \in \C^{N}$, let $C_v \in \C^{N\times N}$ be the matrix 
\[
C_v := \begin{pmatrix}
    v_0 & 0 & 0 & \dots & 0 \\
    v_1 & v_0 & 0 & \dots & 0 \\
    v_2 & v_1 & v_0 & \dots & 0 \\
    \vdots \\
    v_{N-1} & v_{N-2} & v_{N-3} & \dots & v_0
\end{pmatrix}.
\]
Observe that, for $u, v \in \C^{N}$, we have $u \jury v = C_v u$ and $\det C_v = v_0^{N}$.

\begin{lemma}\label{LJuryDetRank1}
Let $A \in \C^{N\times N}$ be positive semidefinite, and let $v=(v_0,\dots,v_{N-1})^T \in \C^{N}$. Then 
\begin{align*}
\det(A \jury vv^*) = |v_0|^{2N} \det A. 
\end{align*}
\end{lemma}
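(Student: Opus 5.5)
The plan is to show that $A \jury vv^*$ is a congruence of $A$ by the lower-triangular Toeplitz matrix $C_v$, namely
\[
A \jury vv^* = C_v A C_v^*,
\]
and then take determinants. Since $\det C_v = v_0^N$, this gives
\[
\det(A\jury vv^*) = |\det C_v|^2 \det A = |v_0|^{2N}\det A .
\]
Positive semidefiniteness of $A$ is not even needed for the identity; it only guarantees the determinant is real.

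To prove the identity, I would first treat rank-one $A$. Write $A=uu^*$. By Lemma~\ref{LJuryRank1} and the observation $u\jury v = C_v u$,
\[
uu^*\jury vv^* = (C_v u)(C_v u)^* = C_v\, uu^* C_v^*.
\]
Next, both sides of the claimed identity are linear in $A$. The left side is linear because the Jury product \eqref{Jury-product} is bilinear, and the right side is linear as a congruence. A positive semidefinite $A$ is a finite sum of rank-one matrices $u_iu_i^*$ by the spectral decomposition, so the identity extends to every positive semidefinite $A$. Indeed, since every matrix is a linear combination of such rank-one Hermitian terms, it extends to all $A$.

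Alternatively, one can check the identity by a direct computation:
\[
(C_vAC_v^*)_{jk}=\sum_{m\le j,\,n\le k} v_{j-m}a_{mn}\overline{v_{k-n}},
\]
and this matches $\sum_{m,n} a_{mn}(vv^*)_{j-m,k-n}$ exactly.

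I do not expect a real obstacle here. The only point needing care is the bookkeeping of the index ranges: the lower-triangularity of $C_v$ must produce exactly the truncated sums $m\le j$, $n\le k$ that appear in the Jury product.
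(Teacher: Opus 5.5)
Your proposal is correct and follows essentially the same route as the paper: it writes $A$ as a sum of rank-one terms $u_ju_j^*$, applies Lemma~\ref{LJuryRank1} with $u\jury v = C_v u$ to get $A\jury vv^* = C_vAC_v^*$, and takes determinants using $\det C_v = v_0^N$. Your direct entrywise check of the identity is also valid. It shows, as you remark, that positive semidefiniteness is not needed for the identity itself.
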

\begin{proof}
Writing $A$ as a Gram matrix, we have 
\[
A = \sum_{j=0}^{N-1} u_j u_j^*
\]
for some $u_0, \dots, u_{N-1} \in \C^{N}$. Using Lemma \ref{LJuryRank1}, 
\begin{align*}
A \jury vv^* &= \left(\sum_{j=0}^{N-1} u_j u_j^*\right) \jury vv^* = \sum_{j=0}^{N-1} (u_j \jury v) (u_j \jury v)^* = \sum_{j=0}^{N-1} (C_v u_j)(C_v u_j)^*\\
&= \sum_{j=0}^{N-1} C_v u_j u_j^* C_v^* = C_v \left(\sum_{j=0}^{N-1} u_j u_j^*\right) C_v^* = C_v A C_v^*.
\end{align*}
Thus, $\det (A \jury vv^*) = \det(C_v) \det(A) \det(C_v^*) = \det A \cdot |v_0|^{2N}$.
\end{proof}

Recall that for two positive semidefinite matrices $A, B \in \C^{N\times N}$, we have $\det(A+B) \geq \det(A) + \det(B)$. Using this and Lemma \ref{LJuryDetRank1}, we obtain an Oppenheim-type inequality for Jury's product. 
\begin{thm}
Let $A=(a_{jk})$ and $B=(b_{jk}) \in \C^{N\times N}$ be positive semidefinite. Then 
\[
\det(A \jury B) \geq b_{00}^{N} \det A. 
\]
Moreover, equality is achieved if $B$ has rank $1$.
\end{thm}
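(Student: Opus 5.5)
Write $B$ as a Gram matrix, $B=\sum_{i=0}^{N-1} v_i v_i^*$ with $v_i=(v_{i,0},\dots,v_{i,N-1})^T\in\C^N$. This can be done, for instance, via the spectral decomposition, with $v_i=\sqrt{\lambda_i}\,w_i$ for an orthonormal eigenbasis $w_i$.

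The Jury product is bilinear, as is clear from the defining formula \eqref{Jury-product}. Hence
\[
A\jury B=\sum_{i=0}^{N-1} A\jury v_iv_i^*.
\]
By Lemma~\ref{LJuryDetRank1}, more precisely by its proof, each summand equals $C_{v_i}AC_{v_i}^*$. Each summand is therefore positive semidefinite, and its determinant is $|v_{i,0}|^{2N}\det A$.

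Next we use superadditivity of the determinant on positive semidefinite matrices, $\det(X+Y)\ge\det X+\det Y$, recalled just before the statement. Applying it repeatedly by induction on the number of summands gives
\[
\det(A\jury B)\ \ge\ \sum_{i=0}^{N-1}\det\big(A\jury v_iv_i^*\big)=\det A\sum_{i=0}^{N-1}|v_{i,0}|^{2N}.
\]
It remains to compare $\sum_i |v_{i,0}|^{2N}$ with $b_{00}^N$. Note that $b_{00}=\sum_i |v_{i,0}|^2$. Set $x_i=|v_{i,0}|^2\ge0$. We need $\sum_i x_i^N$ versus $(\sum_i x_i)^N$, and here the inequality goes the wrong way, since $(\sum x_i)^N\ge \sum x_i^N$. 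This is the main obstacle: a naive Gram decomposition loses too much, so we must choose the decomposition of $B$ cleverly.

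The fix is to choose the decomposition so that only one vector has nonzero $0$-th coordinate. If $b_{00}=0$ the claim is trivial, since $\det(A\jury B)\ge0$ by Jury's positivity result. Otherwise let $v=b_{00}^{-1/2}Be_0$, i.e.\ $v$ is the first column of $B$ divided by $\sqrt{b_{00}}$. Then $B-vv^*$ is the Schur complement of $b_{00}$ padded with a zero first row and column, so it is positive semidefinite. Its first row and column vanish, and $|v_0|^2=b_{00}$.

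Now write $A\jury B=A\jury vv^*+A\jury(B-vv^*)$. Both terms are positive semidefinite, the second by Jury's theorem. Superadditivity together with Lemma~\ref{LJuryDetRank1} then gives
\[
\det(A\jury B)\ge |v_0|^{2N}\det A+\det\big(A\jury(B-vv^*)\big)\ge b_{00}^N\det A.
\]
Alternatively one can use $\det(X+Y)\ge\det X$ for positive semidefinite $X,Y$, which is monotonicity of the determinant.

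For equality, suppose $B$ has rank $1$. Then $B=vv^*$ exactly with $|v_0|^2=b_{00}$, and Lemma~\ref{LJuryDetRank1} gives $\det(A\jury B)=b_{00}^N\det A$.
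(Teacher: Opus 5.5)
Your proof is correct and is essentially the paper's argument: your $v=b_{00}^{-1/2}Be_0$ is exactly the first column of the Cholesky factor $L$ in $B=LL^*$, and $B-vv^*$ is the sum of $v_jv_j^*$ over the remaining columns of $L$. Each of those columns has vanishing $0$-th entry, so the paper gets the same bound from superadditivity and Lemma~\ref{LJuryDetRank1}, with those terms contributing $0$. The only differences are cosmetic: you lump the remaining columns into one positive semidefinite matrix and invoke Jury's theorem for it, and you treat $b_{00}=0$ separately, a case the Cholesky factorization of a semidefinite matrix handles uniformly.
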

\begin{proof}
Recall that $B$ admits a Cholesky decomposition $B = LL^*$ where $L$ is lower triangular \cite[Corollary 7.2.9]{horn2013matrix}. Let $v_0, \dots, v_{N-1} \in \C^{N}$ be the columns of $L$. Using the lemma \ref{LJuryDetRank1} and the super-additivity of the determinant on positive semidefinite matrices, we obtain: 
\begin{align*}
\det(A \jury B) = \det(A \jury \sum_{j=0}^{N-1} v_j v_j^*) \geq \sum_{j=0}^{N-1} \det(A \jury v_j v_j^*)&= \sum_{j=0}^{N-1} |(v_j)_0|^{2N} \det A\\ &= b_{00}^{N} \det A.     
\end{align*}
That we have equality when $B$ has rank $1$ is Lemma \ref{LJuryDetRank1}.
\end{proof}

\section{The transformation $\widetilde{A}$}\label{S:transformation}

Fix the integer $N\geq 1$, and let $A=(a_{jk})_{j,k=0}^{N-1}\in \C^{N\times N}$. To apply an inductive argument, we let $A_{n}=(a_{jk})_{j,k=0}^{n}$, where $n=0,1,\dots,N-1$, denote the leading principal submatrices of $A$. Hence, with this notation, $A_{N-1}=A$ and $A_0=a_{00}$. We define $\widetilde{A} = (\widetilde{a}_{jk})_{j,k=0}^{N-1}$ via

\begin{align}\label{EAstar}
\widetilde{a}_{jk} = \left\{
\begin{array}{ccc}
a_{jk} - \frac{\det(A_{N-1})}{\det(A_{N-2})} & \mbox{if} & j = k = N-1,\\
& & \\
a_{jk} &  & \mbox{otherwise.}\\
\end{array} \right.
\end{align}

Of course, in the above transformation from $A$ to $\widetilde{A}$, we implicitly assumed that $\det(A_{N-2})\ne 0$. Such transformations have a rich history in matrix analysis in order to deduce matrix identities or inequalities. See, for example, the lemma on page two of \cite{MR1574213}. See also Definition 2 in \cite{MR2344681}. The main feature of this transformation is the following result, whose proof is folklore; for convenience, we provide a sketch.

\begin{lem} \label{L:A-Astar}
Let $A \in \C^{N \times N}$ be positive definite. Then $\widetilde{A}$ is positive semidefinite.
\end{lem}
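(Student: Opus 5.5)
The plan is to identify the subtracted quantity $\det(A_{N-1})/\det(A_{N-2})$ as a Schur complement. Then $\widetilde{A}$ is exactly the matrix obtained from $A$ by replacing the corner entry with the value that makes its Schur complement vanish. Since $A$ is positive definite, so is its leading principal submatrix $A_{N-2}$, and hence $\det(A_{N-2})>0$. This guarantees that $\widetilde{A}$ is well defined.

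Write $A$ in block form
\[
A=\begin{pmatrix} A_{N-2} & b\\ b^* & a_{N-1,N-1}\end{pmatrix},
\]
with $b\in\C^{N-1}$. The Schur determinant formula gives
\[
\det A=\det(A_{N-2})\bigl(a_{N-1,N-1}-b^*A_{N-2}^{-1}b\bigr),
\]
so $\det(A_{N-1})/\det(A_{N-2})=a_{N-1,N-1}-b^*A_{N-2}^{-1}b$. Consequently
\[
\widetilde{A}=\begin{pmatrix} A_{N-2} & b\\ b^* & b^*A_{N-2}^{-1}b\end{pmatrix}.
\]

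Next, factor this matrix as a congruence:
\[
\widetilde{A}=\begin{pmatrix} I\\ b^*A_{N-2}^{-1}\end{pmatrix} A_{N-2}\begin{pmatrix} I & A_{N-2}^{-1}b\end{pmatrix}=X^*A_{N-2}X,
\]
where $X=\begin{pmatrix} I & A_{N-2}^{-1}b\end{pmatrix}$ is an $(N-1)\times N$ matrix. The identity holds blockwise, using that $A_{N-2}$ is Hermitian. Since $A_{N-2}$ is positive definite, for every $z\in\C^N$ we get $z^*\widetilde{A}z=(Xz)^*A_{N-2}(Xz)\ge 0$, so $\widetilde{A}$ is positive semidefinite. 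This argument also shows that $\widetilde{A}$ has rank $N-1$ and $\det\widetilde{A}=0$.

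There is no real obstacle. The only points requiring care are the edge case $N=1$, where $A_{N-2}$ is empty and one takes $\det(A_{-1})=1$, so that $\widetilde{A}=0$, and checking that the off-diagonal block of $X^*A_{N-2}X$ is indeed $b$.
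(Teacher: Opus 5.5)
Your proof is correct, but it takes a different route from the paper.

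The paper never computes the corner entry of $\widetilde{A}$. It argues in three steps:
\begin{enumerate}
\item The leading principal submatrices of $\widetilde{A}$ of orders $1,\dots,N-1$ coincide with those of $A$, so their determinants are positive by Sylvester's criterion.
\item Since the determinant is linear in the $(N-1,N-1)$ entry, with cofactor $\det A_{N-2}$, one gets $\det\widetilde{A}=\det A-\frac{\det A_{N-1}}{\det A_{N-2}}\det A_{N-2}=0$.
\item A minor-based positivity criterion from Horn--Johnson then gives that $\widetilde{A}$ is positive semidefinite.
\end{enumerate}
You instead identify $\det A_{N-1}/\det A_{N-2}$ as the Schur complement $a_{N-1,N-1}-b^*A_{N-2}^{-1}b$ and write down the Gram factorization $\widetilde{A}=X^*A_{N-2}X$, which gives positive semidefiniteness directly from the definition.

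The paper's version is shorter and obtains $\det\widetilde{A}=0$ without inverting anything. However, it rests on a determinantal criterion that must be applied with care. Nonnegativity of all leading principal minors does not by itself imply positive semidefiniteness; $\mathrm{diag}(0,-1)$ is a counterexample. What saves the argument is that the first $N-1$ leading minors are strictly positive. The most direct justification of the criterion in that situation is precisely the Schur-complement congruence you write down.

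Your version is self-contained and gives more information:
\begin{itemize}
\item $\widetilde{A}$ has rank exactly $N-1$.
\item The corner entry of $\widetilde{A}$ is $b^*A_{N-2}^{-1}b\ge 0$, which immediately yields the inequality $a_{N-1,N-1}\ge \det A_{N-1}/\det A_{N-2}$ that the paper records right after the lemma.
\item You handle the degenerate case $N=1$ explicitly, which the paper passes over.
\end{itemize}
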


\begin{proof}
Since $A_n = (\widetilde{A})_n$, for $n=0,1,\dots,N-2$, by the Sylvester criterion \cite[Theorem 7.2.5]{horn2013matrix}, we have $\det(\widetilde{A}_n) >0$. Moreover, upon expansion with respect to the last column, we see that
\[
\det((\widetilde{A})_{N-1}) = \det(\widetilde{A}) = \det(A) - \frac{\det(A_{N-1})}{\det(A_{N-2})} \det(A_{N-2}) = 0.
\]
Hence, $\widetilde{A}$ is positive semidefinite by \cite[Theorem 7.2.5(c)]{horn2013matrix}.
\end{proof}

Note that under the hypothesis of Lemma \ref{L:A-Astar}, it follows that
\begin{equation}\label{E:ineq-ann}
a_{(N-1),(N-1)} \geq \frac{\det(A_{N-1})}{\det(A_{N-2})},
\end{equation}
which is again a known result.

\section{The general proof}\label{S:gen-proof}

\begin{thm} \label{T:inequality}
For integer $N\geq 1$ and positive definite $A=(a_{jk}),B=(b_{jk})\in \C^{N\times N}$, we have
\begin{equation}\label{E:Jury-3}
\det(A \diamond B) \geq a_{00}b_{00} \prod_{n=1}^{N-1} \left( a_{00} \frac{\det(B_n)}{\det(B_{n-1})} + b_{00} \frac{\det(A_n)}{\det(A_{n-1})} \right).
\end{equation}
\end{thm}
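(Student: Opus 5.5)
Induction on $N$, patterned on Oppenheim's original argument and using the transformation $\widetilde{A}$ of Section~\ref{S:transformation}. For $N=1$ both sides equal $a_{00}b_{00}$. For $N\ge 2$, assume \eqref{E:Jury-3} holds in size $N-1$.

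The structural fact is that the Jury product is \emph{causal}: $(A\jury B)_{jk}$ involves only entries $a_{mn}, b_{mn}$ with $m\le j$, $n\le k$. Consequently $(A\jury B)_{N-2}=A_{N-2}\jury B_{N-2}$ for leading principal submatrices. Moreover, the only entry of $A\jury B$ depending on $a_{N-1,N-1}$ or $b_{N-1,N-1}$ is the $(N-1,N-1)$ entry, and it does so through the terms $a_{N-1,N-1}b_{00}+a_{00}b_{N-1,N-1}$.

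Step 1: split off the corner entries. Write $A=\widetilde{A}+\alpha E$ and $B=\widetilde{B}+\beta E$, where $E$ is the matrix unit at position $(N-1,N-1)$, $\alpha=\det A_{N-1}/\det A_{N-2}$ and $\beta=\det B_{N-1}/\det B_{N-2}$. By the observation above,
\[
A\jury B=\widetilde{A}\jury\widetilde{B}+\bigl(\alpha b_{00}+\beta a_{00}\bigr)E.
\]
The cross terms $\widetilde{A}\jury\beta E$ and $\alpha E\jury\widetilde{B}$ contribute only to the corner, namely $\beta\widetilde a_{00}=\beta a_{00}$ and $\alpha b_{00}$. The term $E\jury E$ contributes nothing when $N\ge 2$, since it would need index $2(N-1)\le N-1$.

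Step 2: expand along the last row. The determinant is affine in the corner entry, so
\[
\det(A\jury B)=\det(\widetilde{A}\jury\widetilde{B})+(\alpha b_{00}+\beta a_{00})\det\bigl((A\jury B)_{N-2}\bigr).
\]
By Lemma~\ref{L:A-Astar}, $\widetilde{A}$ and $\widetilde{B}$ are positive semidefinite. Jury's theorem then makes $\widetilde{A}\jury\widetilde{B}$ positive semidefinite, so its determinant is nonnegative and can be dropped. This gives
\[
\det(A\jury B)\ge\bigl(a_{00}\beta+b_{00}\alpha\bigr)\det\bigl(A_{N-2}\jury B_{N-2}\bigr).
\]

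Step 3: apply induction. $A_{N-2}$ and $B_{N-2}$ are positive definite of size $N-1$, and their leading principal submatrices are $A_n$, $B_n$ for $n\le N-2$. So the induction hypothesis bounds $\det(A_{N-2}\jury B_{N-2})$ below by $a_{00}b_{00}\prod_{n=1}^{N-2}(\cdots)$, and multiplying by the new factor for $n=N-1$ yields \eqref{E:Jury-3}.

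The main point to verify carefully is the decomposition in Step~1, namely that perturbing only the corner entries of $A$ and $B$ perturbs only the corner of the Jury product, with coefficient exactly $a_{00}$ and $b_{00}$. This uses $N\ge2$ so that $E\jury E=0$. The rest is bookkeeping. As a sanity check, the bound is consistent with Theorem~\ref{T:main}: expanding the product and keeping only the two extreme terms should give $b_{00}^N\det A+a_{00}^N\det B$.
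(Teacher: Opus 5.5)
Your proof is correct and takes the paper's approach: apply Jury's theorem to $\widetilde{A}\jury\widetilde{B}$, expand in the corner entry to get $\det(A\jury B)\ge(a_{00}\beta+b_{00}\alpha)\det\bigl((A\jury B)_{N-2}\bigr)$, and iterate. Your Step~1 (the cross terms land only in the corner, and $E\jury E=0$ for $N\ge2$) and the causality identity $(A\jury B)_{N-2}=A_{N-2}\jury B_{N-2}$ spell out the ``simple expansion'' and the induction that the paper leaves implicit.
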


\begin{proof}
Set $C=A \diamond B$, and define $A_n$, $B_n$, and $C_n$ as in Section \ref{S:transformation}. Then we know from Jury's theorem that $\widetilde{A}\jury \widetilde{B}$ is positive semidefinite, which upon taking the determinant and a simple expansion yields

\begin{equation}\label{E:Jury-2}
\det(C_{N-1}) \geq \left( a_{00} \frac{\det(B_{N-1})}{\det(B_{N-2})} + b_{00} \frac{\det(A_{N-1})}{\det(A_{N-2})} \right) \det(C_{N-2}).
\end{equation}
By induction, we multiply \eqref{E:Jury-2} by the values $N-1, N-2, \dots,0$ and obtain \eqref{E:Jury-3}.
\end{proof}

We are now ready for the final proof.

\begin{proof}[Proof of Theorem~\ref{T:main}]
Replacing $A$ and $B$ respectively by $A+\varepsilon I$ and $B+\varepsilon I$, where $\varepsilon>0$, the inequality \ref{E:Jury-3}, in particular, implies
\[
\det((A+\varepsilon I) \diamond (B+\varepsilon I)) \geq (a_{00}+\varepsilon)^{N} \det(B+\varepsilon I) + (b_{00}+\varepsilon)^{N} \det(A+\varepsilon I).
\]
The result follows by letting $\varepsilon \to 0^+$.
\end{proof}

\begin{quest}
We conclude the section with a question. It is straightforward to verify that Theorem~\ref{T:main} holds as an identity for $N=2$. Thus, for \( N \geq 3 \), it may be of interest to construct positive definite \( A \) and \( B \) for which equality holds in Theorem~\ref{T:main}. In contrast to Oppenheim’s inequality~\eqref{Oppen}, one can verify that strict inequality holds for all positive definite diagonal matrices \( A \) and \( B \), suggesting that the proposed question may not be trivial.
\end{quest}

\section{The inequalities for causal products}\label{S:causal}

In this final section, we show how the classical Oppenheim--Schur inequalities and our inequalities for the Jury product can be unified and extended to a much broader family of matrix products, which we call \textit{causal products}. Throughout the rest of this section, for a given integer $N\geq 1$, and for every $n=0, \dots, N-1$, we fix:
\begin{enumerate}
\item a set $T_n\subseteq \{0,\dots,n\}$ with $n\in T_n$; 
\item a permutation $\sigma_{n}:T_n\longrightarrow T_n$.
\end{enumerate}
For complex matrices $A=(a_{ij})_{i,j=0}^{N-1},B:=(b_{ij})_{i,j=0}^{N-1}$, we define a \textit{causal product} $A\star B\in \mathbb{C}^{N\times N}$ through
\begin{align}\label{eq:causal}
    (A\star B)_{jk}:=\sum_{(p,q)\in T_j\times T_k} a_{p,q}b_{\sigma_{j}(p),\sigma_{k}(q)}
\end{align}
for all $0\leq j,k\leq N-1$. Notice that for special choices of $(T_n)_{n}$ and $(\sigma_n)_n$, we can obtain the Hadamard and Jury products:
\begin{align}\label{eq:Hadamard+Jury-special-cases}
\begin{aligned}
T_n:=\{n\} \quad & \textrm{ and } \quad \sigma_n: n \mapsto n \qquad  \qquad \mbox{(Hadamard)}\\
T_n:=\{0,\dots,n\} \quad & \textrm{ and } \quad \sigma_n: j \mapsto n-j \qquad  \qquad \mbox{(Jury)}
\end{aligned}
\end{align}

Furthermore, as for the Schur product and the Jury product, the causal products $\star$ preserve positivity.

\begin{prop}
If $A,B \in \C^{N \times N}$ are positive semidefinite, then $A\star B$ is positive semidefinite.
\end{prop}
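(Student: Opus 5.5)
We follow the strategy used for Jury's product in Section~\ref{S:special-case}: first treat rank-one matrices, then pass to general positive semidefinite matrices by bilinearity.

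\textbf{Rank-one case.} Let $A=uu^*$ and $B=vv^*$ with $u,v\in\C^N$. Define $w\in\C^N$ by
\[
w_j:=\sum_{p\in T_j} u_p\, v_{\sigma_j(p)}, \qquad j=0,\dots,N-1.
\]
Then, exactly as in Lemma~\ref{LJuryRank1}, the double sum in \eqref{eq:causal} factors:
\[
(A\star B)_{jk}=\sum_{(p,q)\in T_j\times T_k} u_p\overline{u_q}\,v_{\sigma_j(p)}\overline{v_{\sigma_k(q)}}
=\Bigl(\sum_{p\in T_j}u_pv_{\sigma_j(p)}\Bigr)\overline{\Bigl(\sum_{q\in T_k}u_qv_{\sigma_k(q)}\Bigr)}=w_j\overline{w_k}.
\]
This works because the index set $T_j\times T_k$ is a product set and the pairing $(p,q)\mapsto(\sigma_j(p),\sigma_k(q))$ acts separately on each coordinate. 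Hence $uu^*\star vv^*=ww^*$, which is positive semidefinite.

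\textbf{General case.} Write the Gram decompositions $A=\sum_{r} u_r u_r^*$ and $B=\sum_{s} v_s v_s^*$; a spectral or Cholesky decomposition gives these. The map $(A,B)\mapsto A\star B$ is bilinear over $\C$ in $(A,B)$, since each entry in \eqref{eq:causal} is a sum of products $a_{pq}b_{\cdot\cdot}$. Therefore
\[
A\star B=\sum_{r,s} (u_ru_r^*)\star(v_sv_s^*)=\sum_{r,s} w_{rs}w_{rs}^*,
\]
where each $w_{rs}$ is built from $u_r$ and $v_s$ as above. A sum of positive semidefinite rank-one matrices is positive semidefinite, which proves the proposition.

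\textbf{Where the work lies.} The only step needing care is the factorization in the rank-one case, specifically that the sum over $T_j\times T_k$ splits as a product of a $j$-sum and a $k$-sum. This holds precisely because the index set is a product and the permutations act coordinatewise. Neither the hypothesis $n\in T_n$ nor the bijectivity of $\sigma_n$ is used here; presumably those conditions are needed only for the later determinant inequalities. No step poses a genuine obstacle.
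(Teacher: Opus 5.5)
Your proof is correct and is essentially the paper's argument. The paper writes $A\star B$ directly as the Gram matrix of the vectors $\sum_{p\in T_j} v_p\otimes u_{\sigma_j(p)}\in\C^N\otimes\C^N$, and expanding these tensors in coordinates gives exactly your decomposition $A\star B=\sum_{r,s} w_{rs}w_{rs}^*$. You are also right that positivity uses neither $n\in T_n$ nor the bijectivity of $\sigma_n$, and the paper's proof does not use them either.
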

\begin{proof}
Endow each finite dimensional complex vector space with the standard Hilbert--Schmidt inner product $\langle \cdot,\cdot \rangle$ that is linear in the first variable and conjugate linear in the second. Since $A,B$ are positive semidefinite, there exist $v_0,\dots,v_{N-1}\in \mathbb{C}^{N}$ and $u_0,\dots,u_{N-1}\in \mathbb{C}^{N}$ such that each $a_{jk}=\langle v_{k},v_{j}\rangle$ and $b_{jk}=\langle u_{k},u_{j}\rangle$. Then   
\begin{align*}
    (A\star B)_{jk}&=\sum_{(p,q)\in T_j\times T_k} a_{p,q}b_{\sigma_{j}(p),\sigma_{k}(q)}
    = \sum_{(p,q)\in T_j\times T_k} \langle v_{q},v_{p}\rangle \langle u_{\sigma_{k}(q)},u_{\sigma_j(p)}\rangle\\
    &= \sum_{(p,q)\in T_j\times T_k} \langle v_{q}\otimes u_{\sigma_{k}(q)},v_{p}\otimes u_{\sigma_j(p)}\rangle \\
    &= \Big{\langle} \sum_{q\in T_k} v_{q}\otimes u_{\sigma_{k}(q)}, \sum_{p\in T_j}v_{p}\otimes u_{\sigma_j(p)}\Big{\rangle},
\end{align*}
where $\otimes$ is the standard Kronecker product. Therefore, $A\star B$ is a Gram matrix and is therefore positive semidefinite.
\end{proof}

Since Schur and Jury products are special cases of causal products, it is natural to ask whether the Schur--Oppenheim inequalities extend to such products. The next result establishes the corresponding inequalities for certain special causal products.

\begin{thm}\label{main:causal-2}
Suppose $\star$ is a causal product as in Equation \eqref{eq:causal}. Then, for all positive semidefinite matrices $A, B \in \C^{N \times N}$, 
\begin{enumerate}
\item If $\sigma_n(n) = n$ for all $n=0,\dots,N-1$, then 
\[
\det A\star B + \det AB \geq \det A\prod_{n=0}^{N-1}b_{nn} + \det B \prod_{n=0}^{N-1}a_{nn}.
\]
\item If $\sigma_n(n) \ne n$ for all $n=0,\dots,N-1$, then 
\[
\det A\star B \geq \det A\prod_{n=0}^{N-1}b_{\sigma_n(n),\sigma_n(n)} + \det B \prod_{n=0}^{N-1}a_{\sigma_n^{-1}(n),\sigma_n^{-1}(n)}.
\]
\end{enumerate}
\end{thm}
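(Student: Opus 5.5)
The plan is to adapt the argument of Theorem~\ref{T:inequality}: use the transformation $\widetilde{A}$ of Section~\ref{S:transformation} and run an induction on leading principal submatrices. First reduce to $A,B$ positive definite by replacing them with $A+\varepsilon I$, $B+\varepsilon I$ and letting $\varepsilon\to0^+$, exactly as in the proof of Theorem~\ref{T:main}.

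The structural fact I need is that, since $T_j\subseteq\{0,\dots,j\}$, the entries $(A\star B)_{jk}$ with $j,k\le n$ involve only $A_n$ and $B_n$. Hence $(A\star B)_n=A_n\star B_n$ for the causal product restricted to $\{0,\dots,n\}$, so an inequality proved at the top level can be applied to every leading block. Set $C=A\star B$, $\alpha_n=\det A_n/\det A_{n-1}$ and $\beta_n=\det B_n/\det B_{n-1}$ for $n\ge1$, with $\alpha_0=a_{00}$ and $\beta_0=b_{00}$. Write $E$ for the matrix unit at position $(N-1,N-1)$. Then $A=\widetilde{A}+\alpha E$ and $B=\widetilde{B}+\beta E$ with $\alpha=\alpha_{N-1}$, $\beta=\beta_{N-1}$.

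Expanding bilinearly, $X\star E$ and $E\star X$ are supported only at $(N-1,N-1)$, because the index $N-1$ lies only in $T_{N-1}$ and $\sigma_{N-1}$ is a bijection. Concretely, $(E\star X)_{N-1,N-1}=x_{\sigma(N-1),\sigma(N-1)}$ and $(X\star E)_{N-1,N-1}=x_{\sigma^{-1}(N-1),\sigma^{-1}(N-1)}$, where $\sigma=\sigma_{N-1}$. The term $E\star E$ equals $E$ if $\sigma(N-1)=N-1$ and $0$ otherwise. Therefore
\[
A\star B=\widetilde{A}\star\widetilde{B}+\gamma E,
\]
and expanding the determinant along the last row gives $\det C_{N-1}=\det(\widetilde{A}\star\widetilde{B})+\gamma\det C_{N-2}$. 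Here $\widetilde{A}\star\widetilde{B}$ is positive semidefinite by Lemma~\ref{L:A-Astar} and the preceding Proposition. This yields $\det C_{N-1}\ge\gamma\det C_{N-2}$, and iterating over the leading blocks gives $\det C\ge a_{00}b_{00}\prod_{n\ge1}\gamma_n$.

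In case (2), $\gamma_n=\alpha_n b_{\sigma_n(n)\sigma_n(n)}+\beta_n a_{\sigma_n^{-1}(n)\sigma_n^{-1}(n)}$. Note that $T_0=\{0\}$ forces $\sigma_0(0)=0$, so the hypothesis has to be read for $n\ge1$, with the $n=0$ factor equal to $a_{00}b_{00}$. All terms are nonnegative, so in the expanded product I keep only the two extreme monomials. Since $\prod\alpha_n=\det A$ and $\prod\beta_n=\det B$, this gives exactly the claimed bound.

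In case (1), $E\star E=E$, so
\[
\gamma_n=\alpha_n b_{nn}+\beta_n a_{nn}-\alpha_n\beta_n .
\]
Put $x_n=\alpha_n/a_{nn}$ and $y_n=\beta_n/b_{nn}$. These lie in $[0,1]$ by \eqref{E:ineq-ann}, and $x_0=y_0=1$. Then $\gamma_n=a_{nn}b_{nn}(x_n+y_n-x_ny_n)\ge0$, so multiplying the recursive inequalities is legitimate. Using $\det(AB)=\prod\alpha_n\beta_n$, the claim reduces to the elementary inequality
\[
\prod_n(x_n+y_n-x_ny_n)+\prod_n x_ny_n\ \ge\ \prod_n x_n+\prod_n y_n,\qquad x_n,y_n\in[0,1],
\]
after multiplying through by $\prod a_{nn}b_{nn}$.

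This scalar inequality is the main obstacle; it is the same lemma that underlies the classical third inequality in \eqref{Oppen}. I would prove it by induction on the number of factors. Set $P=\prod_{n<m}x_n$, $Q=\prod_{n<m}y_n$ and $R=\prod_{n<m}(x_n+y_n-x_ny_n)$, and assume $R+PQ\ge P+Q$. Adding one factor with values $x,y$, the new difference is linear in $R$ with nonnegative coefficient $x+y-xy$. So it suffices to check it at $R=P+Q-PQ$, where it factors as $(1-x)(1-y)(P+Q-PQ)+(\dots)$ in a manifestly nonnegative form. The one thing to verify carefully is the sign bookkeeping in this final factorization.
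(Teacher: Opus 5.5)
Your proposal is correct and follows the paper's route: the same $\widetilde A\star\widetilde B$ recursion (Theorem~\ref{T:causal}), and the same extreme-monomial argument for (2). For (1), your scalar induction is the paper's induction on $\ell_n$ in the normalized variables $x_n,y_n$, using your product bound $R$ where the paper carries $\det(A\star B)_n$ itself; this makes the algebra much shorter.

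\textbf{Correction to the final factorization.} The form you anticipate, $(1-x)(1-y)(P+Q-PQ)+(\dots)$ with nonnegative remainder, cannot hold. At $R=P+Q-PQ$ the new difference vanishes when $x=y=0$, while $(1-x)(1-y)(P+Q-PQ)=P+Q-PQ$ is then positive. The correct identity is
\[
(P+Q-PQ)(x+y-xy)+PQxy-Px-Qy \;=\; Py(1-x)(1-Q)+Qx(1-y)(1-P),
\]
which is nonnegative since $P,Q\in[0,1]$. It is exactly the normalized right-hand side of \eqref{eq:main-ineq}. Your bound $P,Q\le 1$, from each $x_n,y_n\le 1$, plays the role of Hadamard's inequality in the paper.

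\textbf{Caveat for (2).} You correctly read the hypothesis as $n\ge 1$, since $\sigma_0(0)=0$ is forced. Under that reading you need $N\ge 2$, so that the all-$\alpha$ and all-$\beta$ monomials are distinct. For $N=1$ the claimed bound becomes $a_{00}b_{00}\ge 2a_{00}b_{00}$, which fails.
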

\begin{proof}
    The result follows from Corollaries \ref{cor:causal} and \ref{cor:causal-1}, which are proved in the following.
\end{proof}

Notice that in the special cases of $T_n, \sigma_n$ given by Equation \eqref{eq:Hadamard+Jury-special-cases}, Theorem~\ref{main:causal-2}, recover the Schur--Oppenheim inequalities for the Hadamard and Jury products. Theorem~\ref{main:causal-2} thus unifies these inequalities for two products that both originated in applications to interpolation problems in complex function theory \cite{agler2002pick}. 

We now proceed to prove Theorem~\ref{main:causal-2}. The first step parallels Oppenheim’s original argument in \cite{MR1574213}, using an inductive procedure to obtain a lower bound for $\det (A\star B)$ in terms of products of functions associated with the leading principal submatrices. This connects the causal nature of the $\star$ products with the leading principal minors. 

Recall that for a matrix $A = (a_{jk})_{j,k=0}^{N-1} \in \C^{N \times N}$, we let $A_{n}=(a_{jk})_{j,k=0}^{n}$ denote the leading principal submatrices of $A$, where $n=0, \dots, N-1$. Given a set $S$, we also let ${\bf 1}_S$ denote the indicator function of $S$.

\begin{thm}\label{T:causal} 
For positive definite $A,B \in \C^{N \times N}$, and for any causal product $\star$ as in Equation \eqref{eq:causal}, we have
\begin{align*}
\det A\star B \geq & ~a_{00}b_{00} \prod_{n=1}^{N-1}\bigg{(} \frac{\det A_n}{\det A_{n-1}}b_{\sigma_{n}(n),\sigma_{n}(n)} + \frac{\det B_n}{\det B_{n-1}}a_{\sigma_{n}^{-1}(n),\sigma_{n}^{-1}(n)} \\
&\qquad \qquad \qquad -\frac{\det A_n}{\det A_{n-1}}\frac{\det B_n}{\det B_{n-1}} {\bf 1}_{\{\sigma_{n}(n)\}}(n)\bigg{)}.
\end{align*}
\end{thm}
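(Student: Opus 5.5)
I will follow the inductive scheme used in the proof of Theorem~\ref{T:inequality}. The point is that the causal product is compatible with leading principal submatrices: by \eqref{eq:causal}, the entry $(A\star B)_{jk}$ involves only $a_{pq}$ and $b_{\sigma_j(p),\sigma_k(q)}$ with $p,\sigma_j(p)\le j$ and $q,\sigma_k(q)\le k$. Hence $(A\star B)_n = A_n\star B_n$, where the product on the right uses the same data $(T_m,\sigma_m)_{m\le n}$. So, writing $C=A\star B$, it suffices to prove the one-step inequality
\[
\det C_{n} \geq \Big( \alpha_n\, b_{\sigma_n(n),\sigma_n(n)} + \beta_n\, a_{\sigma_n^{-1}(n),\sigma_n^{-1}(n)} - \alpha_n\beta_n {\bf 1}_{\{\sigma_n(n)\}}(n)\Big)\det C_{n-1},
\]
with $\alpha_n=\det A_n/\det A_{n-1}$ and $\beta_n=\det B_n/\det B_{n-1}$. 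Then I telescope from $n=N-1$ down to $n=1$, using $\det C_0=a_{00}b_{00}$ (here $T_0=\{0\}$ and $\sigma_0=\mathrm{id}$). Multiplying the one-step inequalities requires the brackets to be nonnegative, and $\det C_{n-1}\ge0$ holds by the preceding Proposition.

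For the one-step inequality I apply the transformation of Section~\ref{S:transformation} at level $n$. Write $\widetilde{A}_n = A_n - \alpha_n E_{nn}$ and $\widetilde{B}_n = B_n-\beta_n E_{nn}$. By Lemma~\ref{L:A-Astar} both are positive semidefinite, so by the Proposition $\widetilde{A}_n\star\widetilde{B}_n$ is positive semidefinite. Next I compute $A_n\star B_n - \widetilde A_n\star\widetilde B_n$ using bilinearity:
\[
A_n\star B_n = \widetilde A_n\star\widetilde B_n + \alpha_n E_{nn}\star \widetilde B_n + \beta_n \widetilde A_n\star E_{nn} + \alpha_n\beta_n E_{nn}\star E_{nn}.
\]
Since $n\in T_k$ only for $k=n$, a term $a_{pq}$ with $p=q=n$ appears only in entry $(n,n)$. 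Likewise $b_{nn}$ appears only in entry $(n,n)$, namely paired with $a_{\sigma_n^{-1}(n),\sigma_n^{-1}(n)}$. So every correction term is a multiple of $E_{nn}$, and the correction equals $\gamma E_{nn}$ with
\[
\gamma=\alpha_n \widetilde b_{\sigma_n(n),\sigma_n(n)} + \beta_n \widetilde a_{\sigma_n^{-1}(n),\sigma_n^{-1}(n)} + \alpha_n\beta_n{\bf 1}_{\{\sigma_n(n)\}}(n).
\]
If $\sigma_n(n)\ne n$, the tilde entries coincide with the original ones. If $\sigma_n(n)=n$, then $\widetilde b_{nn}=b_{nn}-\beta_n$ and $\widetilde a_{nn}=a_{nn}-\alpha_n$, which gives $\alpha_n b_{nn}+\beta_n a_{nn}-\alpha_n\beta_n$. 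In both cases $\gamma$ is exactly the bracket in the statement.

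Expanding along the last row or column then gives $\det C_n=\det(\widetilde A_n\star\widetilde B_n)+\gamma\det C_{n-1}\ge\gamma\det C_{n-1}$. This uses that the leading $(n-1)$ block of $\widetilde A_n\star\widetilde B_n$ is $C_{n-1}$, since the modification sits only in entry $(n,n)$.

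The main obstacle is the sign of $\gamma$, which the telescoping needs. When $\sigma_n(n)\ne n$ this is clear, since diagonal entries of positive definite matrices are positive. When $\sigma_n(n)=n$, I write $\gamma=\alpha_n(b_{nn}-\beta_n)+\beta_n a_{nn}$, which is positive by \eqref{E:ineq-ann}. With $\gamma\ge 0$ and $\det C_{n-1}>0$ established, the induction closes.
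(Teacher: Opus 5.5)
Your proof is correct and follows essentially the same route as the paper. Both apply the transformation of Section~\ref{S:transformation}, observe that the positive semidefinite matrix $\widetilde A\star\widetilde B$ differs from $A\star B$ only in the last diagonal entry, expand the determinant to get the one-step bound \eqref{eq:intermediate}, and telescope. Your version is more complete than the paper's sketch: you justify $(A\star B)_n=A_n\star B_n$, compute the correction term via bilinearity, and verify (using \eqref{E:ineq-ann}) that each bracket is nonnegative, which is needed to chain the inequalities and which the paper's ``use induction'' leaves implicit.
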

\begin{proof}
Following the steps and notation in Theorem~\ref{T:inequality}, if $j=k=N-1,$ we have that
\begin{align*}
(\widetilde{A}\star \widetilde{B})_{jk}=(A\star B)_{N-1,N-1} &- \frac{\det A_{N-1}}{\det A_{N-2}}b_{\sigma_{N-1}(N-1),\sigma_{N-1}(N-1)}\\
& - \frac{\det B_{N-1}}{\det B_{N-2}}a_{\sigma_{N-1}^{-1}(N-1),\sigma_{N-1}^{-1}(N-1)} \\ 
&+\frac{\det A_{N-1}}{\det A_{N-2}}\frac{\det B_{N-1}}{\det B_{N-2}} {\bf 1}_{\{\sigma_{N-1}(N-1)\}}(N-1),
\end{align*}
and otherwise $(\widetilde{A}\star \widetilde{B})_{jk}=(A\star B)_{jk}$. Therefore, the following is nonnegative:
\begin{align*}
\det (\widetilde{A}\star \widetilde{B}) =&\det (A\star B)_{N-1} -\bigg{[}  \frac{\det A_{N-1}}{\det A_{N-2}}b_{\sigma_{N-1}(N-1),\sigma_{N-1}(N-1)} \\
& + \frac{\det B_{N-1}}{\det B_{N-2}}a_{\sigma_{N-1}^{-1}(N-1),\sigma_{N-1}^{-1}(N-1)} \\
& -\frac{\det A_{N-1}}{\det A_{N-1}}\frac{\det B_{N-1}}{\det B_{N-2}} {\bf 1}_{\{\sigma_{N-1}(N-1)\}}(N-1)\bigg{]} \det(A\star B)_{N-2}.
\end{align*}
Thus
\begin{align}\label{eq:intermediate}
\begin{aligned}
&\det (A\star B)_{N-1} \\
&\geq \bigg{[} \frac{\det A_{N-1}}{\det A_{N-2}}b_{\sigma_{N-1}(N-1),\sigma_{N-1}(N-1)}  + \frac{\det B_{N-1}}{\det B_{N-2}}a_{\sigma_{N-1}^{-1}(N),\sigma_{N-1}^{-1}(N-1)} \\
&  \qquad -\frac{\det A_{N-1}}{\det A_{N-2}}\frac{\det B_{N-1}}{\det B_{N-2}} {\bf 1}_{\{\sigma_{N-1}(N-1)\}}(N-1)\bigg{]} \det(A\star B)_{N-2}.
\end{aligned}
\end{align}
Now use induction to obtain the final inequality.
\end{proof}

\begin{cor}\label{cor:causal}
If a causal product $\star$ is such that ${\bf 1}_{\{\sigma_{n}(n)\}}(n)=0$ for all $n\geq 1$, then for positive semidefinite matrices $A,B \in \C^{N \times N}$, we have
\begin{align*}
    \det A\star B  \geq \det A\prod_{n=0}^{N-1}b_{\sigma_n(n),\sigma_n(n)} + \det B \prod_{n=0}^{N-1}a_{\sigma_n^{-1}(n),\sigma_n^{-1}(n)}.
\end{align*}
\end{cor}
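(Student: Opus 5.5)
The plan is to specialize Theorem~\ref{T:causal} and then remove the positive definiteness assumption by perturbation, as in the proof of Theorem~\ref{T:main}.

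First assume $A,B$ are positive definite. The hypothesis ${\bf 1}_{\{\sigma_n(n)\}}(n)=0$ for $n\ge 1$ kills the subtracted term in every factor of Theorem~\ref{T:causal}. Write $\alpha_n=\det A_n/\det A_{n-1}>0$, $\beta_n=\det B_n/\det B_{n-1}>0$, $x_n=b_{\sigma_n(n),\sigma_n(n)}\ge 0$ and $y_n=a_{\sigma_n^{-1}(n),\sigma_n^{-1}(n)}\ge 0$. Theorem~\ref{T:causal} then reads
\[
\det (A\star B)\ \ge\ a_{00}b_{00}\prod_{n=1}^{N-1}(\alpha_n x_n+\beta_n y_n).
\]
For $n=0$ we have $T_0=\{0\}$, so $\sigma_0(0)=0$ and $x_0=b_{00}$, $y_0=a_{00}$. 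Also $a_{00}=\alpha_0:=\det A_0$ and $b_{00}=\beta_0$.

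Next, expand the product of the binomials $\alpha_nx_n+\beta_ny_n$ over $n=1,\dots,N-1$. Every term is nonnegative, so we may drop all terms except the two extreme ones. This gives
\[
\prod_{n=1}^{N-1}(\alpha_n x_n+\beta_n y_n)\ \ge\ \prod_{n=1}^{N-1}\alpha_n x_n+\prod_{n=1}^{N-1}\beta_n y_n .
\]
By telescoping, $\prod_{n=1}^{N-1}\alpha_n=\det A/a_{00}$, and similarly for $B$. Multiplying by $a_{00}b_{00}$, the first term becomes
\[
b_{00}\det A\prod_{n=1}^{N-1}x_n=\det A\prod_{n=0}^{N-1}b_{\sigma_n(n),\sigma_n(n)},
\]
and the second becomes $\det B\prod_{n=0}^{N-1}a_{\sigma_n^{-1}(n),\sigma_n^{-1}(n)}$. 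This is the claimed bound for positive definite matrices. For $N=1$ the statement reduces to $a_{00}b_{00}\ge 2a_{00}b_{00}$, which is false unless $a_{00}b_{00}=0$. So I would note that the inequality is meant for $N\ge 2$, matching Theorem~\ref{T:main}, since at least one factor is needed to split into two terms.

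Finally, for positive semidefinite $A,B$, apply the positive definite case to $A+\varepsilon I$ and $B+\varepsilon I$ and let $\varepsilon\to 0^+$. Both sides are polynomial, hence continuous, in the entries: the causal product is bilinear, and determinants and diagonal entries are continuous. So the inequality passes to the limit.

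The main obstacle is justifying the appeal to Theorem~\ref{T:causal} itself. Its proof needs $\widetilde A\star\widetilde B$ to be positive semidefinite, which follows from Lemma~\ref{L:A-Astar} and the preceding Proposition. It also needs the claim that only the $(N-1,N-1)$ entry of the product changes. That claim holds because $N-1\in T_j$ only when $j=N-1$, and $\sigma_j$ maps $T_j$ into $\{0,\dots,j\}$; this is exactly the causality. Granting this, the remaining steps are routine.
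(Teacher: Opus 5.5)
Your proposal is correct and follows the paper's own route: specialize Theorem~\ref{T:causal}, where the subtracted terms vanish under the hypothesis, keep only the two extreme terms of the expanded product, telescope the ratios of leading minors, and then pass to the semidefinite case via $A+\varepsilon I$, $B+\varepsilon I$, exactly as in the proof of Theorem~\ref{T:main}. Your remark that the statement fails for $N=1$ (it would read $a_{00}b_{00}\ge 2a_{00}b_{00}$) is a correct caveat that the paper's statement omits; $N\ge 2$ must be assumed there, as it is in Theorem~\ref{T:main}.
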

\begin{proof}
The proof is similar to the proof of Theorem~\ref{T:main} via Theorem~\ref{T:causal}.
\end{proof}

In the case of the Jury product, we have $\sigma_{n}(n)=\sigma_n^{-1}(n)=0$ and Corollary~\ref{cor:causal} recovers Theorem~\ref{T:main}.

\begin{cor}\label{cor:causal-1}
If a causal product $\star$ is such that ${\bf 1}_{\{\sigma_{n}(n)\}}(n)=1$ for all $n\geq 1$, i.e., $\sigma_{n}(n)=n$ for all $n\geq 1$, then:
\begin{align*}
    \det A\star B + \det AB \geq \det A\prod_{n=0}^{N-1}b_{nn} + \det B \prod_{n=0}^{N-1}a_{nn},
\end{align*}
for positive semidefinite $A$ and $B$.
\end{cor}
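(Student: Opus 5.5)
The plan is to follow Oppenheim's classical deduction of the third inequality in \eqref{Oppen} from a product lower bound, now using Theorem~\ref{T:causal}. By continuity, replacing $A,B$ by $A+\varepsilon I$, $B+\varepsilon I$ and letting $\varepsilon\to0^+$, as in the proof of Theorem~\ref{T:main}, it suffices to treat positive definite $A,B$.

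Write
\[
\alpha_n:=\frac{\det A_n}{\det A_{n-1}}, \qquad \beta_n:=\frac{\det B_n}{\det B_{n-1}}
\]
for $n\ge 1$, and set $\alpha_0=a_{00}$, $\beta_0=b_{00}$. Since $\sigma_n(n)=n$, Theorem~\ref{T:causal} gives
\[
\det A\star B\ \ge\ \alpha_0\beta_0\prod_{n=1}^{N-1}\bigl(\alpha_n b_{nn}+\beta_n a_{nn}-\alpha_n\beta_n\bigr).
\]
Telescoping gives $\prod_{n}\alpha_n=\det A$, $\prod_n\beta_n=\det B$ and $\prod_n\alpha_n\beta_n=\det AB$. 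By \eqref{E:ineq-ann}, applied to the leading principal submatrices, we have $a_{nn}\ge\alpha_n>0$ and $b_{nn}\ge\beta_n>0$.

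The key algebraic step is the elementary inequality: if $x_n\ge\alpha_n>0$ and $y_n\ge\beta_n>0$, then
\[
\prod_{n=0}^{N-1}(\alpha_n y_n+\beta_n x_n-\alpha_n\beta_n)\ \ge\ \prod_n\alpha_n\prod_n y_n+\prod_n\beta_n\prod_n x_n-\prod_n\alpha_n\beta_n.
\]
The $n=0$ factor equals $a_{00}b_{00}$ when $x_0=a_{00}$ and $y_0=b_{00}$, so it fits the pattern. Dividing by $\prod\alpha_n\beta_n$ and setting $s_n=y_n/\beta_n\ge1$, $t_n=x_n/\alpha_n\ge1$, this becomes
\[
\prod(s_n+t_n-1)\ge\prod s_n+\prod t_n-1 .
\]
This is exactly the lemma used by Oppenheim. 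I would prove it by induction on $N$. Let $S=\prod_{n<N-1}s_n$ and $T=\prod_{n<N-1}t_n$, and write $s=s_{N-1}$, $t=t_{N-1}$. By the inductive hypothesis,
\[
(S+T-1)(s+t-1)-(Ss+Tt-1)=(S-1)(t-1)+(T-1)(s-1)\ge0,
\]
which uses only $S,T,s,t\ge1$ and the positivity $S+T-1>0$ when multiplying.

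Combining these steps gives $\det A\star B\ge \det A\prod b_{nn}+\det B\prod a_{nn}-\det AB$, which is the claim. The main obstacle is the bookkeeping that makes the product from Theorem~\ref{T:causal} match the lemma's form exactly. This includes the $n=0$ factor, which is handled by the convention above. It also includes checking that each factor is positive, which follows from $s_n,t_n\ge1$, so that the multiplicative induction is legitimate. The limiting argument is routine because both sides are continuous in the entries.
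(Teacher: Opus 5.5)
Your proof is correct and is essentially the paper's argument. The paper runs the same induction directly on the unnormalized quantities $\ell_n$ via \eqref{eq:intermediate}, and after dividing by $\det A_{N-1}\det B_{N-1}$ the right-hand side of \eqref{eq:main-ineq} is exactly your $(S-1)(t-1)+(T-1)(s-1)$. Your normalization $s_n=b_{nn}/\beta_n$, $t_n=a_{nn}/\alpha_n$ just packages that computation more compactly, and it turns the paper's separate appeal to Hadamard's inequality into $S,T\ge 1$. One small slip: when you multiply the inductive hypothesis by the last factor, the positivity you need is that of $s+t-1\ge 1$, not of $S+T-1$.
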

\begin{proof}
We prove the inequality for positive definite matrices $A,B \in \C^{N \times N}$. By continuity, the same result holds for all positive semidefinite matrices. 

For $n= 0,1,\dots,N-1$, define the following expressions:
\begin{align*}
\ell_n & := \det (A\star B)_n + \det A_n \det B_n - \det B_{n}\prod_{j=0}^{n} a_{jj}  -  \det A_{n}\prod_{j=0}^{n} b_{jj},    \\
L_{n} & := \frac{\det A_{n}}{\det A_{n-1}}b_{n,n} + \frac{\det B_{n}}{\det B_{n-1}}a_{n,n} -\frac{\det A_{n}}{\det A_{n-1}}\frac{\det B_{n}}{\det B_{n-1}}.
\end{align*}
Then, from \eqref{eq:intermediate} we have
\begin{align*}
\det (A\star B)_{N-1} \geq L_{N-1} \det(A\star B)_{N-2}.
\end{align*}
Now, writing $\det (A\star B)_{N-1}$ and $\det (A\star B)_{N-2}$ in terms of $\ell_{N-1}$ and $\ell_{N-2}$, we obtain
\begin{align}\label{eq:startingpoint}
\begin{aligned}
& \ell_{N-1} - \det A_{N-1} \det B_{N-1} + \det B_{N-1} \prod_{j=0}^{N-1} a_{jj}  +  \det A_{N-1} \prod_{j=0}^{N-1} b_{jj} \\
&\geq L_{N-1} \Big{[} \ell_{N-2} - \det A_{N-2} \det B_{N-2} + \det B_{N-2} \prod_{j=0}^{N-2} a_{jj} \\
& \qquad\qquad +  \det A_{N-2} \prod_{j=0}^{N-2} b_{jj} \Big{]}.
\end{aligned}
\end{align}
Our aim is to show that the above is equivalent to
\begin{align}\label{eq:main-ineq}
\begin{aligned}
&\ell_{N-1} -  L_{N-1}  \ell_{N-2}\\
&\geq \det B_{N-1} \bigg{(} \frac{\prod_{n=0}^{N-2}b_{nn}}{\det B_{N-2}} - 1 \bigg{)} \Big{[} a_{N-1,N-1}\det A_{N-2}  - \det A_{N-1} \Big{]} \\
& \qquad + \det A_{N-1} \bigg{(} \frac{\prod_{n=0}^{N-2}a_{nn}}{\det A_{N-2}}  - 1 \bigg{)} \Big{[} b_{N-1,N-1}\det B_{N-2}  - \det B_{N-1} \Big{]}.
\end{aligned}
\end{align}
We begin by re-arranging \eqref{eq:startingpoint} and obtain:
\begin{align*}
    \ell_{N-1} - L_{N-1}\ell_{N-2} \geq \ar,
\end{align*}
where
\begin{align*}
\ar &= \det A_{N-1} \det B_{N-1} - \det B_{N-1} \prod_{j=0}^{N-1} a_{jj}  -  \det A_{N-1} \prod_{j=0}^{N-1} b_{jj}\\
& - L_{N-1} \Big{[} \det A_{N-2} \det B_{N-2} - \det B_{N-2} \prod_{j=0}^{N-2} a_{jj}  -  \det A_{N-2} \prod_{j=0}^{N-2} b_{jj} \Big{]}.
\end{align*}
In what follows, we will use the following identities:
\begin{align}\label{eq:prod-re-arrangement}
\begin{aligned}
\prod_{j=0}^{N-1} a_{jj} 
&= a_{N-1,N-1} \prod_{j=0}^{N-2}a_{jj}\\
&= a_{N-1,N-1} \Big{[} \det A_{N-2} \bigg{(} \frac{\prod_{j=0}^{N-2}a_{jj}}{\det A_{N-2}} -1\bigg{)} + \det A_{N-2} \Big{]};\\
\prod_{j=0}^{N-1} b_{jj} 
&= b_{N-1,N-1} \prod_{j=0}^{N-2}b_{jj} \\
&= b_{N-1,N-1} \Big{[} \det B_{N-2} \bigg{(} \frac{\prod_{j=0}^{N-2}b_{jj}}{\det B_{N-2}} -1\bigg{)} + \det B_{N-2} \Big{]}.
\end{aligned}
\end{align}
We re-arrange $\ar$ in three terms:
\begin{align*}
\ar &= \Big{[} \det A_{N-1} \det B_{N-1} - L_{N-1} \det A_{N-2} \det B_{N-2} \Big{]} \\
& \qquad + \prod_{j=0}^{N-2}a_{jj} \Big{[} L_{N-1} \det B_{N-2} - a_{N-1,N-1} \det B_{N-1} \Big{]} \\
& \qquad + \prod_{j=0}^{N-2}b_{jj} \Big{[} L_{N-1} \det A_{N-2} - b_{N-1,N-1} \det A_{N-1} \Big{]}.    
\end{align*}
We simplify the second term above in the following way:
\begin{align*}
& L_{N-1} \det B_{N-2} - a_{N-1,N-1} \det B_{N-1}\\
& = \frac{\det A_{N-1}}{\det A_{N-2}}b_{N-1,N-1} \det B_{N-2} 
+ \frac{\det B_{N-1}}{\det B_{N-2}}a_{N-1,N-1}\det B_{N-2} \\
&\qquad -\frac{\det A_{N-1}}{\det A_{N-2}}\frac{\det B_{N-1}}{\det B_{N-2}}\det B_{N-2} 
- a_{N-1,N-1} \det B_{N-1}\\
& = \frac{\det A_{N-1}}{\det A_{N-2}}b_{N-1,N-1} \det B_{N-2} -\frac{\det A_{N-1} \det B_{N-1}}{\det A_{N-2}}\\
& = \frac{\det A_{N-1}}{\det A_{N-2}} \Big{(}b_{N-1,N-1} \det B_{N-2} - \det B_{N-1} \Big{)}.
\end{align*}
Similarly,
\begin{align*}
& L_{N-1} \det A_{N-2} - b_{N-1,N-1} \det A_{N-1}\\
& = \frac{\det B_{N-1}}{\det B_{N-2}} \Big{(} a_{N-1,N-1} \det A_{N-2} - \det A_{N-1} \Big{)}.
\end{align*}
Thus, we have:
\begin{align}\label{eq:R-after-starstar}
\begin{aligned}
\mathcal{R}
&=
\Big[\det A_{N-1}\det B_{N-1}-L_{N-1}\det A_{N-2}\det B_{N-2}\Big] \\
&\qquad
+ \prod_{j=0}^{N-2}a_{jj} \frac{\det A_{N-1}}{\det A_{N-2}}
\Big[ b_{N-1,N-1}\det B_{N-2}-\det B_{N-1}\Big] \\
&\qquad
+ \prod_{j=0}^{N-2}b_{jj} \frac{\det B_{N-1}}{\det B_{N-2}}
\Big[ a_{N-1,N-1}\det A_{N-2}-\det A_{N-1}\Big].
\end{aligned}
\end{align}
Using \eqref{eq:prod-re-arrangement}, we get that
\begin{align*}
&\prod_{j=0}^{N-2}a_{jj} \frac{\det A_{N-1}}{\det A_{N-2}}\Big[ b_{N-1,N-1}\det B_{N-2}-\det B_{N-1}\Big]\\
& =\det A_{N-1}\Bigg(\frac{\prod_{j=0}^{N-2}a_{jj}}{\det A_{N-2}}-1\Bigg)
\Big[b_{N-1,N-1}\det B_{N-2}-\det B_{N-1}\Big]\nonumber\\
&\quad
+\det A_{N-1}\Big[b_{N-1,N-1}\det B_{N-2}-\det B_{N-1}\Big].
\end{align*}
Similarly,
\begin{align*}
&\prod_{j=0}^{N-2}b_{jj} \frac{\det B_{N-1}}{\det B_{N-2}}\Big[ a_{N-1,N-1}\det A_{N-2}-\det A_{N-1}\Big]\\
&=\det B_{N-1}\Bigg(\frac{\prod_{j=0}^{N-2}b_{jj}}{\det B_{N-2}}-1\Bigg)
\Big[a_{N-1,N-1}\det A_{N-2}-\det A_{N-1}\Big]\nonumber\\
&\quad
+\det B_{N-1}\Big[a_{N-1,N-1}\det A_{N-2}-\det A_{N-1}\Big].
\end{align*}
Consider the sum of the second terms from each of the previous two identities and simplify:
\begin{align}\label{eq:L-iden}
\begin{aligned}
\mathcal{L}
&:= b_{N-1,N-1}\det A_{N-1}\det B_{N-2}-\det A_{N-1}\det B_{N-1} \\
&\qquad +a_{N-1,N-1}\det A_{N-2}\det B_{N-1}-\det A_{N-1}\det B_{N-1} \\
&=b_{N-1,N-1}\det A_{N-1}\det B_{N-2}
+a_{N-1,N-1}\det A_{N-2}\det B_{N-1}\\
& \qquad -2\det A_{N-1}\det B_{N-1}.
\end{aligned}
\end{align}
Using the definition of $L_{N-1}$, we have
\begin{align}\label{eq:firstterm-expanded}
\begin{aligned}
&L_{N-1}\det A_{N-2}\det B_{N-2}\\
&= \frac{\det A_{N-1}}{\det A_{N-2}}b_{N-1,N-1}\det A_{N-2}\det B_{N-2} \\
& \qquad +\frac{\det B_{N-1}}{\det B_{N-2}}a_{N-1,N-1}\det A_{N-2}\det B_{N-2} \\
&\qquad
-\frac{\det A_{N-1}}{\det A_{N-2}}\frac{\det B_{N-1}}{\det B_{N-2}}\det A_{N-2}\det B_{N-2} \\
&= b_{N-1,N-1}\det A_{N-1}\det B_{N-2}
+a_{N-1,N-1}\det A_{N-2}\det B_{N-1}\\
& \qquad -\det A_{N-1}\det B_{N-1}.
\end{aligned}
\end{align}
Therefore, the first term in $\ar$ simplifies to:
\begin{align}\label{eq:star-cancellation}
\begin{aligned}
&\det A_{N-1}\det B_{N-1}-L_{N-1}\det A_{N-2}\det B_{N-2} \\
&= 2\det A_{N-1}\det B_{N-1}
-b_{N-1,N-1}\det A_{N-1}\det B_{N-2}\\
&\qquad -a_{N-1,N-1}\det A_{N-2}\det B_{N-1}\\
& = - \mathcal{L}.
\end{aligned}
\end{align}
Combining the above, we obtain the inequality \eqref{eq:main-ineq}. Using Hadamard's inequality and \eqref{E:ineq-ann}, the right-hand side of inequality \eqref{eq:main-ineq} is nonnegative. Also, again using \eqref{E:ineq-ann}, for any $n=0,1,\dots,N-1$, the following expression is nonnegative:
\[
\frac{1}{a_{n,n}b_{n,n}}L_{n} = \frac{\det A_{n}}{a_{n,n}\det A_{n-1}} + \frac{\det B_{n}}{b_{n,n}\det B_{n-1}} -\frac{\det A_{n}}{a_{n,n}\det A_{n-1}}\frac{\det B_{n}}{b_{n,n}\det B_{n-1}}.
\]
Thus, $L_n \geq 0$ for all $n=0,1,\dots,N-1$. Finally, since $\ell_{0}=0$, using \eqref{eq:main-ineq}, the above nonnegativity and induction, we conclude that $\ell_{N-1}\geq 0$, which completes the proof.
\end{proof}

\subsection*{Acknowledgments} DG was supported by NSF grant \#235006. JM was supported by the Canada Research Chairs program and NSERC Discovery grant. PKV was supported by the Centre de recherches math\'ematiques and Laval University (CRM-Laval) Postdoctoral Fellowship.

\bibliographystyle{plain}
\bibliography{Inequality-biblio}
\end{document}